\newtheorem{theor}{Theorem}
\newtheorem{defin}[theor]{Definition}
\newtheorem{lemma}[theor]{Lemma}
\newtheorem{prop}[theor]{Proposition}
\newtheorem{ex}{Example}
\newtheorem{remark}[theor]{Remark}
\title{Projectively induced K\"ahler cones over regular Sasakian manifolds}
\author[Stefano Marini, Nicoletta Tardini and Michela Zedda]{Stefano Marini, Nicoletta Tardini and Michela Zedda}
\address{Dipartimento di Scienze Matematiche, Fisiche e Informatiche\\
Unit\`{a} di Matematica e Informatica,
Universit\`{a} degli Studi di Parma\\
Parco Area delle Scienze 53/A, 43124 \\
Parma, Italy}
\email{stefano.marini@unipr.it}
\email{nicoletta.tardini@unipr.it}
\email{michela.zedda@unipr.it}
\subjclass[2020]{53C25; 32H02; 53C42; 32Q20}
\keywords{Sasakian manifolds; Ricci--flat K\"ahler metrics; complex submanifolds}
\thanks{This research has been financially supported by the project Prin 2022 – Real and Complex Manifolds: Geometry and Holomorphic Dynamics – Italy, and by GNSAGA of INdAM}
\begin{document}
\maketitle
\begin{abstract}
Motivated by a conjecture in \cite{LSZ} we prove that the K\"ahler cone over a regular complete Sasakian manifold is Ricci--flat and projectively induced if and only if it is flat. We also obtain that, up to $\mathcal D_a$--homothetic transformations, K\"ahler cones over homogeneous compact Sasakian manifolds are projectively induced. As main tool we provide a relation  between the K\"ahler potentials of the transverse K\"ahler metric and of the cone metric. 
\end{abstract}

\section{Introduction}
In the celebrated work \cite{calabi}, E. Calabi gives an algebraic criterion to check when a K\"ahler manifold admits a K\"ahler (i.e. holomorphic and isometric) immersion into a complex space form. Of particular interest is the elliptic case, namely when the complex space form is the complex projective space endowed with the Fubini--Study metric, where many questions are still open. Throughout this paper $\mathds C{\rm P}^N$ is always implicitly understood, as a K\"ahler manifold, equipped with the Fubini-Study metric. We say that a K\"ahler manifold is {\em projectively induced} if it admits a {\em local} K\"ahler immersion into the complex projective space $\mathds C{\rm P}^{N\le\infty}$. 
Obviously many examples of projectively induced manifolds can be constructed pulling--back the Fubini--Study metric on complex submanifolds. However, it is much more difficult to find examples of projectively induced manifolds with prescribed curvature. In \cite{H} D. Hulin proves that the scalar curvature of a compact K\"ahler--Einstein submanifold of the complex projective space is forced to be positive.
 It is important to highlight that when the submanifold is compact the ambient space $\mathds C{\rm P}^N$ can be taken to be finite dimensional. In fact, when the ambient space is infinite-dimensional,  Hulin's result does not hold, as there are  examples of K\"ahler--Einstein submanifolds of $\mathds C{\rm P}^\infty$ with negative scalar curvature (see \cite{LZ}). Furthermore, the complex flat space $\mathds C^n$ is an example of K\"ahler submanifold of $\mathds C{\rm P}^\infty$. Observe that by the rigidity result of Calabi \cite[Theorem 9]{calabi}, $\mathds C^n$ does not admit a K\"ahler immersion in $\mathds C{\rm P}^{N<\infty}$. Actually, in the recent work \cite{ALL} C. Arezzo, C. Li and A. Loi prove that no $\mathds C{\rm P}^{N<\infty}$ admits Ricci--flat K\"ahler submanifolds. On the other hand, when the ambient space is infinite-dimensional, it is still an open problem to understand whether the flat metric is the only example of Ricci-flat projectively induced K\"ahler metric, as conjectured by A. Loi, F. Salis and F. Zuddas in \cite{LSZ} (see also \cite{LZZ} for a class of metrics that confirms such conjecture).

In this paper we address the problem of studying Sasakian manifolds whose K\"ahler cone is Ricci-flat and projectively induced.
We recall that the K\"ahler cone over a Sasakian manifold is Ricci--flat if and only if the Sasakian manifold is Sasaki--Einstein, and it is flat if and only if the Sasakian manifold is a standard sphere (see Theorem \ref{SE} below).  More precisely we prove the following.
\begin{theor}\label{main}
The K\"ahler cone over a regular complete Sasakian manifold is Ricci--flat and projectively induced if and only if it is flat.
\end{theor}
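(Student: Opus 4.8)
The plan is to prove the nontrivial implication, that a Ricci--flat projectively induced cone is flat; the converse is immediate, since a flat cone is locally $\mathbb{C}^{n+1}$, which is Ricci--flat and, by Calabi's criterion (as $e^{|z|^2}-1$ is of positive type), admits a K\"ahler immersion into $\mathbb{CP}^\infty$. So assume the cone $C(S)$, of complex dimension $n+1$, is Ricci--flat and projectively induced. I would first record that completeness forces compactness: by Theorem~\ref{SE} the manifold $S$ is Sasaki--Einstein, hence $\mathrm{Ric}_{g_S}=2n\,g_S>0$, so $S$ is compact by Bonnet--Myers, and by regularity the Reeb foliation is a circle fibration $\pi:S\to M$ over a compact complex manifold $M$ of dimension $n$ carrying the transverse K\"ahler metric $\omega_M$.

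The heart of the argument is the potential relation. In a local foliated chart I would choose holomorphic coordinates $(w,z_1,\dots,z_n)$ on $C(S)$ adapted to the holomorphic $\mathbb{C}^*$--action generated by $r\partial_r-i\xi$, with the $z_j$ pulled back from $M$ and $w$ a fibre coordinate, normalized so that the cone potential reads $r^2=|w|^2 e^{\Phi_M}$, where $\Phi_M$ is a local potential for $\omega_M$ (the model being $\mathbb{C}^{n+1}\to\mathbb{CP}^n$, for which $|Z|^2=|w|^2(1+|z|^2)=|w|^2 e^{\Phi_{FS}}$). A direct computation of $\det g_C$ in these coordinates gives $\det g_C=|w|^{2n}e^{(n+1)\Phi_M}\det g_M$, and since $|w|^{2n}$ is pluriharmonic off the zero section, Ricci--flatness of the cone, i.e.\ $\partial\bar\partial\log\det g_C=0$, is equivalent to $\rho_M=(n+1)\,\omega_M$; in particular $M$ is a Fano K\"ahler--Einstein manifold.

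Next I would transfer the projective induction from the cone to the base. Taking $\Phi_M$ to be the diastasis of $\omega_M$ (a gauge choice absorbed into a holomorphic change of $w$), Calabi's criterion says $C(S)$ is projectively induced iff $e^{r^2}-1$ is of positive type. Expanding $e^{r^2}-1=\sum_{k\ge 1}\frac{1}{k!}\,|w|^{2k}e^{k\Phi_M}$ and using that distinct monomials $w^k\bar w^k$ are mutually orthogonal, the positivity decouples across $k$ and is equivalent to $e^{k\Phi_M}$ being of positive type for every $k$; the term $k=1$ already yields that $e^{\Phi_M}-1$ is of positive type, i.e.\ $(M,\omega_M)$ is itself projectively induced. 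Since $M$ is compact, the ambient space can be taken finite dimensional, so there is a holomorphic embedding $\iota:M\hookrightarrow\mathbb{CP}^N$ with $\omega_M=\iota^*\omega_{FS}$.

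Finally I would extract the rigidity. Writing $L:=\iota^*\mathcal{O}(1)$, the equality $\omega_M=\iota^*\omega_{FS}$ gives $[\omega_M]=2\pi\,c_1(L)$ with $L$ ample, while $\rho_M=(n+1)\omega_M$ gives $c_1(M)=(n+1)c_1(L)$, so $-K_M=(n+1)L$ and the Fano index of $M$ is at least $n+1$. By the Kobayashi--Ochiai theorem the index is at most $n+1$, with equality only for $\mathbb{CP}^n$; hence $M\cong\mathbb{CP}^n$ and $L=\mathcal{O}(1)$, and by uniqueness of the K\"ahler--Einstein metric in its class $\omega_M$ is Fubini--Study, so $\Phi_M=\log(1+|z|^2)$ and $r^2=|w|^2(1+|z|^2)=|Z|^2$, i.e.\ the cone is flat. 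I expect the main obstacle to be the potential relation $r^2=|w|^2 e^{\Phi_M}$ together with the bookkeeping in Calabi's criterion: one must set up the adapted holomorphic coordinates coherently enough to make sense of the transverse potential, verify that the gauge freedom lets $\Phi_M$ be the diastasis while preserving the product form of $r^2$, and justify the term--by--term decoupling of the positive--type condition across the homogeneity degrees in $w$. Once this is in place, the geometric conclusion follows cleanly from Kobayashi--Ochiai.
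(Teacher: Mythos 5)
Your overall skeleton matches the paper's: Myers' theorem to get compactness, the Structure Theorem to fiber $S$ over a compact K\"ahler manifold, the local potential relation $r^2=|w|^2e^{\Phi_M}$ (this is Lemma \ref{giulio}, with the constant $a$ normalized away), transfer of projective induction from the cone to the base, and then rigidity of the base. Your endgame differs harmlessly from the paper's: you get $c_1(M)=(n+1)c_1(L)$ with $L$ ample and invoke Kobayashi--Ochiai plus uniqueness of the K\"ahler--Einstein metric in its class, where the paper simply quotes Hulin's Theorem \ref{hulin}; both are fine, and Hulin's theorem is essentially the packaged form of your index argument. Your direct computation of $\det g_C$ in place of citing Theorem \ref{SE} is also fine.

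The genuine gap is exactly at the step you flag as ``the main obstacle'': the transfer of projective induction \emph{from the cone to the base}, which is the content of the paper's Proposition \ref{amelia} and is the technical heart of the whole paper. Your justification --- ``Calabi's criterion says $C(S)$ is projectively induced iff $e^{r^2}-1$ is of positive type,'' followed by decoupling across the powers $|w|^{2k}$ --- is not Calabi's criterion. Calabi's criterion is stated for the diastasis centered at a point \emph{of the manifold}, and $r^2=|w|^2e^{\Phi_M}$ is the formal diastasis centered at $w=0$, which is not a point of the cone; nor can the cone be enlarged to include it, since $\frac i2\partial\bar\partial\left(|w|^2e^{\Phi_M}\right)$ degenerates along $w=0$. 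At an honest point $q=(w_0,p)$, $w_0\neq 0$, the diastasis is $|w|^2e^{\Phi_M}-w\bar w_0-\bar w w_0+|w_0|^2$, and its Taylor expansion in $w-w_0$ does \emph{not} decouple across the monomials $w^k\bar w^k$, so the orthogonality you appeal to is not visible where Calabi's criterion actually applies. (Your Fourier idea can be rescued, but not in one line: $\mathds C\setminus\{0\}\times U$ is not simply connected, so the local Calabi immersion only provides components with unitary monodromy around the fiber, and one must first trivialize this monodromy --- or pass to the universal cover --- before Laurent-expanding in $w$ and averaging over circles.) The paper closes precisely this gap by a degeneration argument: it takes the diastasis at $q=(\epsilon,p)$, extracts the principal submatrix of its Calabi matrix corresponding to exactly one derivative in $z_0$ and one in $\bar z_0$, computes it in closed form, rescales by $c^{-2}\epsilon^{2-2c}$, and lets $\epsilon\to 0$; positive semidefiniteness survives the limit, and the limit matrix is exactly the Calabi matrix of $e^{c\psi}-1$. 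Some argument of this kind is indispensable: without it, the implication ``cone projectively induced $\Rightarrow$ base projectively induced'' --- the only direction of the equivalence that your proof of Theorem \ref{main} actually uses --- remains unproved.
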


Theorem \ref{main} should be compared to the results in \cite{BCML, CML, LPZ}, where the existence of a Sasakian immersion of a Sasakian manifold into a Sasakian space form is investigated either in terms of the existence of a K\"ahler immersion of the K\"ahler cone above or in terms of the transverse K\"ahler metric below. A key step in their approach consists in observing that a compact Sasakian manifold immersed into a regular Sasakian manifold is itself regular, as follows from \cite[Prop. 3.1]{Harada} (see also \cite[Prop. 5]{BCML} for a generalization to the non--compact case). More precisely, they do not need to assume {\em a priori} the regularity of the Sasakian manifold since they obtain it by assuming a Sasakian immersion into a (regular) Sasakian space form. 
In particular, when the Sasakian space form is elliptic one can lift the Sasakian map to the K\"ahler cone in a natural way, obtaining that a Sasakian immersion into a sphere is possible if and only if a K\"ahler immersion of the K\"ahler cone into the complex Euclidean space occurs. When the Sasakian space form is not elliptic its K\"ahler cone is not a complex space form, but using regularity it is possible to relate the existence of a Sasakian immersion to a K\"ahler immersion of the transverse K\"ahler metric below. In our approach instead, we need to impose the regularity of the Sasakian manifold, since no Sasakian immersion is assumed to exist. More precisely, we cannot deduce the existence of a Sasakian immersion from the K\"ahler cone being projectively induced, since the complex projective space is not a cone over a Sasakian manifold.

The novelty of our paper consists in relating the existence of a K\"ahler immersion of the K\"ahler cone over a regular Sasakian manifold into the (infinite-dimensional) complex projective space, to the existence of a K\"ahler immersion of the transverse K\"ahler manifold below into a complex projective space (Proposition \ref{amelia} below).

Finally, we show that by performing a $\mathcal D_a$--homothetic transformation, namely a rescaling of the structure (see Formula \eqref{da} below), we can find many examples of projectively induced K\"ahler cones over homogeneous Sasakian compact manifolds.
More precisely, as a consequence of Proposition \ref{amelia} we get the following:
\begin{theor}\label{main2}
Let $(S,\xi,\eta,\Phi,g)$ be a homogeneous compact Sasakian manifold. Then, up to a $\mathcal D_a$--homothetic transformation, its K\"ahler cone is projectively induced.
\end{theor}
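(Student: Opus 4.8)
The plan is to derive Theorem \ref{main2} directly from Proposition \ref{amelia}: since the latter reduces the projective inducedness of the K\"ahler cone to that of the transverse K\"ahler metric below, it suffices to exhibit a $\mathcal D_a$--homothetic transformation after which the transverse metric is projectively induced.

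First I would use the fact that a compact homogeneous Sasakian manifold is automatically regular: the transitive action of its automorphism group forces the characteristic foliation to be regular, this being the Sasakian analogue of the Boothby--Wang theorem for compact homogeneous contact manifolds. Thus $S$ is a principal $S^1$--bundle over a compact homogeneous K\"ahler manifold $(M,\omega_T)$, where $\omega_T$ is the descended transverse K\"ahler form and $M$ is acted on transitively by a compact group $G$ of holomorphic isometries. By the Borel--Remmert splitting, $M$ is the product of a generalized flag manifold and a flat complex torus, and $\omega_T$ is a $G$--invariant K\"ahler metric whose class is, by the Boothby--Wang construction, a real multiple of the (integral) Euler class of the bundle $S\to M$. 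In particular $[\omega_T]\in H^2(M;\mathds R)$ is rational, so there is $a>0$ with $a[\omega_T]$ integral; since the $\mathcal D_a$--homothety \eqref{da} rescales the transverse metric by $a$ while fixing the transverse complex structure and the foliation, after such a transformation the transverse K\"ahler class becomes integral.

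The technical heart is then to show that an invariant K\"ahler metric with integral class on $M$ is projectively induced. On the flag factor the integral class is $c_1$ of a very ample homogeneous line bundle $L$, and Borel--Weil theory provides a $G$--equivariant Kodaira embedding $M\hookrightarrow \mathds C{\rm P}(H^0(M,L)^{*})$; equivariance forces the pulled--back Fubini--Study metric to be $G$--invariant and cohomologous to $\omega_T$, and since two invariant K\"ahler forms in the same class differ by $i\partial\bar\partial f$ with $f$ an invariant, hence constant, function, the pullback coincides with $\omega_T$. On the torus factor $\omega_T$ is flat, and here I would invoke locality of the immersion: the flat metric of $\mathds C^n$ admits a K\"ahler immersion into $\mathds C{\rm P}^\infty$, so this factor is projectively induced; combining the two factors via the Calabi product (Segre) embedding yields a local K\"ahler immersion of the product metric into some $\mathds C{\rm P}^{N\le\infty}$.

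Finally, applying Proposition \ref{amelia} to the $\mathcal D_a$--transformed structure transfers projective inducedness from the transverse metric up to the K\"ahler cone, which proves the theorem. I expect the main obstacle to be the identification in the third step of the invariant representative of the integral class with the Fubini--Study pullback. The delicate points are the uniqueness of invariant K\"ahler metrics within a fixed class, which underlies the matching on the flag factor but \emph{fails} on the torus factor, where the Fubini--Study pullback is not translation--invariant and forces the separate local argument, together with the careful bookkeeping of how \eqref{da} rescales the transverse data so that integrality can genuinely be attained, which relies on the rationality afforded by the regular (Hodge) structure.
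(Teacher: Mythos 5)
Your proposal is correct in its overall architecture and follows the same skeleton as the paper (regularity of compact homogeneous Sasakian manifolds, fibration over a compact homogeneous K\"ahler base, a $\mathcal D_a$--rescaling to reach an integral transverse class, and Proposition \ref{amelia} to transfer projective inducedness from the base to the cone), but the key middle step is handled by a genuinely different route. The paper quotes \cite[Theorem 8.3.6]{BG}, which gives not only regularity but also that the base $X$ is \emph{simply connected}, hence a generalized flag manifold with no torus factor; it then simply cites Takeuchi's theorem \cite{take} to get a positive integer $k$ such that $k\omega$ is projectively induced, and performs the $\mathcal D_{ak}$--homothety so that the cone potential becomes $|z_0|^{2k}e^{k\psi}$, to which Proposition \ref{amelia} applies. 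You instead invoke the Borel--Remmert/Matsushima splitting and then essentially \emph{reprove} the relevant case of Takeuchi's theorem: an equivariant Borel--Weil embedding whose Fubini--Study pullback is invariant and cohomologous to the given invariant form, hence equal to it. This buys self-containedness and, in principle, a sharper conclusion (integrality of the class suffices, with no further integer multiple beyond the normalization one), but it carries a burden of detail that the citation avoids: you must choose a $G$--invariant inner product on $H^0(M,L)$ so that the pullback is actually invariant; very ampleness and $G$--linearization of ample bundles on flag manifolds must be quoted; the function $f$ with $\omega_1-\omega_2=i\partial\bar\partial f$ is not \emph{a priori} invariant — one only gets $f\circ g-f$ constant for each $g$, and killing this cocycle uses compactness of $G$ (or a normalization $\int_M f\,dV=0$ against an invariant volume); and the bookkeeping between ``integral class'' and the paper's normalization $\omega_{FS}=\frac i2\partial\bar\partial\log(1+\|z\|^2)$ (whose class is $\pi$ times a generator) must be absorbed into the choice of $a$ — exactly the role played by Takeuchi's $k$ and the paper's $\mathcal D_{ak}$. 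Finally, your entire torus-factor discussion (flat metric into $\mathds C{\rm P}^\infty$ plus the Segre/Calabi product) is correct but superfluous: since the base of a compact homogeneous Sasakian manifold is simply connected, the torus factor never occurs.
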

For other results concerning immersions of homogeneous Sasakian manifolds the reader is referred to \cite[Theorem 4]{CML}, where it is proved that up to a $\mathcal D_a$-homothetic deformation a compact homogeneous Sasakian manifold admits a Sasakian immersion into the Sasakian sphere if and only if its fundamental group is cyclic (see also \cite[Theorem 1.5]{LPZ} for a noncompact version). It is worth pointing out that from Theorem \ref{main2} we obtain examples of $\eta$--Einstein Sasakian manifolds whose K\"ahler cone is projectively induced, in contrast to the Sasaki--Einstein case studied in Theorem \ref{main}. This occurs since the Sasaki--Einstein condition is very rigid and it transforms to $\eta$--Einstein under $\mathcal D_a$-homothetic transformations.\\

The paper is organized as follows. Sections 2 and 3 are devoted to recalling the main definitions and results we need, respectively in the K\"ahler and the Sasakian contexts. In Section 4 we describe the relation between the K\"ahler potential on the cone over a Sasakian manifold and that of the transverse metric. Finally in Section 5 we prove the main theorems.

\medskip
\noindent
\emph{Acknowledgments:} The authors are grateful to Andrea Loi and Giovanni Placini for useful discussions on this paper. The authors would also like to thank the anonymous referees for a very careful and deep reading of the paper.

\section{Projectively induced K\"ahler manifolds}

We start by recalling some well-known facts about projectively induced K\"ahler manifolds and by fixing some notation.
Let $(M,g)$ be an $n$-dimensional K\"ahler manifold and denote by $\omega$ its associated $(1,1)$-form. Recall that, given any point $p\in M$, the condition of being K\"ahler is equivalent to the existence, in a neighborhood $U_p$ of $p$, of a real valued function $\varphi\!:U_p\rightarrow \mathds R$ called {\em K\"ahler potential} for $\omega$ such that $\omega_{|_{U_p}}=\frac i2\partial\bar\partial \varphi$. A K\"ahler potential is not unique, as any other function obtained from $\varphi$ by adding the real part of a holomorphic function is also a K\"ahler potential.
We say that $(M,g)$ is {\em projectively induced} if and only if  for all $p\in M$ there exists a holomorphic and isometric (i.e. {K\"ahler}) immersion $f\!:U_p\rightarrow \mathds C{\rm P}^N$, $N\leq \infty$, such that $f^*\omega_{FS}=\omega$.  Here $\mathds C{\rm P}^\infty$ denotes the infinite-dimensional complex projective space, namely the set of equivalence classes with the usual equivalence relation of points $(Z_0,Z_1,\dots)\in l^2(\mathds C)\setminus\{0\}$, endowed with the Fubini--Study metric $\omega_{FS}$ (see \cite{calabi}), where by $l^2(\mathds C)$ we denote the Hilbert space of sequences of complex numbers $z=(z_0,z_1,z_2,\dots)$ limited in norm, i.e. $\sum_{j=0}^\infty|z_j|^2<\infty$. In the sequel we will also denote by $\omega_0$ the flat metric on $\ell^2(\mathds C)$, that is $\omega_0=\frac i2\partial \bar \partial ||z||^2=\frac i2\sum_{j=0}^\infty dz_j\wedge d\bar z_j$. 
Recall that if $[Z_0:\dots:Z_j:\dots]$ are homogeneous coordinates and $(z_1,\dots, z_j,\dots)$, $z_j:=\frac{Z_j}{Z_0}$, are affine coordinates on $U_0:=\{Z_0\neq 0\}$, the Fubini--Study metric reads on $U_0$ as
$$
\omega_{FS{}_{|_{U_0}}}:=\frac i2\partial\bar\partial \log(1+||z||^2).
$$
We can restrict ourselves to considering real analytic K\"ahler metrics, since the pull-back by a holomorphic map of the Fubini--Study metric is necessarily real analytic and K\"ahler.
 Thus, one can define the {\em diastasis function} $D\!:U_p\times {U_p}\rightarrow \mathds R$ by:
$$
D(z,w):=\tilde\varphi(z,\bar z)+\tilde\varphi(w,\bar w)-\tilde\varphi(z,\bar w)-\tilde\varphi(w,\bar z),
$$
where  $\tilde\varphi$ is the function obtained by extending the  K\"ahler potential $\varphi$ on a
neighborhood of the diagonal in $U_p\times \overline{U_p}$ (here $\overline{U}_p$ denotes the neighborhood defined by conjugated local coordinates).
It is easy to see that fixing one of the two variables, the function $D_w(z):=D(z,w)$ is a K\"ahler potential for $g$, called diastasis function {\em centered at $w$}. 
The diastasis function has been introduced by Calabi in \cite{calabi} for its property of being - once one of the variables is fixed - a K\"ahler potential invariant by pull-backs by holomorphic maps, i.e. if $(\tilde M,\tilde\omega)$ is a second K\"ahler manifold and $f\!:U_p\rightarrow \tilde M$ is holomorphic and such that $f^*\tilde\omega=\omega$ we have:
$$
\frac{\partial^2 D_p(z)}{\partial z_j\partial \bar z_k}=\frac{\partial^2 (\tilde D_{f(p)}\circ f)(z)}{\partial z_j\partial \bar z_k}
$$
where we denote by $D_p(z)$ the diastasis function of $M$ on $U_p$ centered at $p$ and by $\tilde D_{f(p)}$ the diastasis function for $\tilde M$ in a neighborhood of $f(p)$. 

Let us denote by 
$(b_{jk})$ the $\infty \times \infty$ matrix of coefficients of the following power expansion:
\begin{equation}\label{expansion}
e^{D_p(z)}-1:=\sum_{j,k=0}^\infty b_{jk}z^{m_j}\bar z^{m_k},
\end{equation}
where we use a multi--index notation $m_j=(m_{j,1},\dots, m_{j,n})$, $z^{m_j}=z_1^{m_{j,1}}\cdots z_n^{m_{j,n}}$, and, setting $|m_j|:=m_{j,1}+\cdots +m_{j,n}$, the strings $m_j$'s are ordered in such a way that $m_0=(0,\dots,0)$, $|m_j|\leq |m_k|$ for $j<k$, and when $|m_j|=|m_k|$ the order can be taken e.g. to be lexicographic.
Furthermore,
\begin{equation}\label{bjkexplicit}
b_{jk}:=\frac{1}{m_j!m_k!}\frac{\partial^{|m_j|+|m_k|}}{\partial z^{m_j}\partial \bar z^{m_k}}\left(e^{D_p(z)}-1\right),
\end{equation}
where $m_j!=m_{j,1}!\cdots m_{j,n}!$.

The importance of the diastasis function relies on the following theorem, which summarizes the results for projectively induced metrics proved by Calabi in his celebrated work \cite{calabi}.
\begin{theor}[E. Calabi {\cite[Theorems 8 and 9]{calabi}}]\label{calabicentralmatrix}
Let $(M,g)$ be a real analytic K\"ahler manifold. 
 Then, $(M,g)$ admits a local K\"ahler immersion into $\mathds C{\rm P}^N$ if and only if the matrix $(b_{jk})$ defined by \eqref{expansion} is positive semidefinite of rank at most N.
Furthermore, the K\"ahler immersion is unique up to unitary transformations of the ambient space.
\end{theor}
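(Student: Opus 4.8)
The plan is to reduce both implications to two facts: the pull-back invariance of the diastasis recalled above, and the explicit form of the diastasis of the Fubini--Study metric, after which the matrix $(b_{jk})$ is recognized as a Gram matrix. First I would compute that, centering the potential $\log(1+\|z\|^2)$ of $\omega_{FS}$ on the affine chart at the point $[1:0:\cdots:0]$, the Fubini--Study diastasis there is $\log(1+\|z\|^2)$ itself, so that $e^{D^{FS}_{0}(z)}-1=\sum_\alpha z_\alpha\bar z_\alpha$.

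\textbf{Necessity.} Suppose $f\!:U_p\to\mathds C{\rm P}^N$ is a K\"ahler immersion. Since unitary transformations preserve $\omega_{FS}$, hence the diastasis, I may assume $f(p)=[1:0:\cdots:0]$ and write $f=(f_1,\dots,f_N)$ in affine coordinates with each $f_\alpha$ holomorphic and $f_\alpha(p)=0$. By the invariance of the diastasis, $D_p(z)=\log\bigl(1+\sum_\alpha|f_\alpha(z)|^2\bigr)$, whence $e^{D_p(z)}-1=\sum_\alpha|f_\alpha(z)|^2$. Expanding $f_\alpha(z)=\sum_j c_{\alpha j}z^{m_j}$ and comparing the coefficient of $z^{m_j}\bar z^{m_k}$ with \eqref{expansion} yields $b_{jk}=\sum_{\alpha=1}^N c_{\alpha j}\bar c_{\alpha k}$. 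Thus $(b_{jk})$ is the Gram matrix of the vectors $w_j=(c_{1j},\dots,c_{Nj})$, so it is positive semidefinite of rank at most $N$.

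\textbf{Sufficiency.} Conversely, assume $(b_{jk})$ is positive semidefinite of rank $r\le N$. A Hermitian positive semidefinite matrix of rank $r$ can be written as $b_{jk}=\sum_{\alpha=1}^r c_{\alpha j}\bar c_{\alpha k}$, for instance via its spectral decomposition, and this defines formal power series $f_\alpha(z)=\sum_j c_{\alpha j}z^{m_j}$. The hard part will be to show that these series converge on a neighborhood of $p$, so that $f=(f_1,\dots,f_r)$ is a genuine holomorphic map into $\mathds C{\rm P}^r\subset\mathds C{\rm P}^N$: this is where the real analyticity of $e^{D_p(z)}-1$ must be used, dominating the coefficients $c_{\alpha j}$ by the $b_{jk}$ through Cauchy-type estimates. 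Once convergence is established, by construction $\sum_\alpha|f_\alpha|^2=e^{D_p}-1$, so $f^*\omega_{FS}$ and $\omega$ have the same diastasis centered at $p$ and therefore coincide, giving the desired K\"ahler immersion.

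\textbf{Uniqueness.} Finally, two K\"ahler immersions into $\mathds C{\rm P}^N$ inducing $\omega$ share the diastasis $D_p$, hence the same matrix $(b_{jk})$; their coefficient systems then have equal Gram matrices, and the elementary fact that two finite families of vectors with the same Gram matrix differ by a unitary isometry shows that the two immersions agree up to a unitary transformation of the ambient space.
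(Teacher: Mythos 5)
This statement is one the paper does not prove at all: it is quoted verbatim from Calabi's 1953 paper (Theorems 8 and 9), so there is no internal proof to compare against. Your reconstruction is, in fact, essentially Calabi's own argument: the hereditary property of the diastasis reduces the necessity direction to reading off $(b_{jk})$ as the Gram matrix of the Taylor coefficient vectors of the affine components of the immersion, and sufficiency reverses this by factoring $(b_{jk})$ and re-summing the series. Two points deserve flagging. First, the step you label ``the hard part'' (convergence of the $f_\alpha$) is a genuine part of Calabi's proof, but the method you indicate does close it: from $b_{jj}=\sum_\alpha|c_{\alpha j}|^2$ one gets $|c_{\alpha j}|\le\sqrt{b_{jj}}$, and real analyticity gives absolute convergence of $\sum_j b_{jj}|z|^{2m_j}$ near $p$, so Cauchy--Schwarz yields locally uniform convergence of each $f_\alpha$ and of $\sum_\alpha|f_\alpha|^2$, justifying the rearrangement $\sum_\alpha|f_\alpha|^2=e^{D_p}-1$. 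Second, for $N=\infty$ the phrase ``spectral decomposition'' is too glib: an infinite positive semidefinite matrix need not define a bounded operator on $\ell^2$, so one should instead obtain the vectors $w_j$ by the completion of the finitely supported sequences under the semi-inner product $\langle e_j,e_k\rangle:=b_{jk}$; similarly, your uniqueness argument via equal Gram matrices is airtight for finite $N$, while in the infinite-dimensional, non-full case one only gets a linear isometry of the spans a priori (this is the same caveat implicit in Calabi's own formulation, which is stated for full immersions). With these two refinements your proposal is a correct proof along the same lines as the cited source.
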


\begin{remark}\label{globalcalabi}\rm 
It is worth pointing out that the condition of being positive semidefinite as well as the rank of $(b_{jk})$ do not depend on the point $p$ but only on the metric $g$. Furthermore, the rank of $(b_{jk})$ is exactly $N$ when the immersion is {\em full}, i.e. when the image is not contained in any lower dimensional complex projective space. Observe also that when $M$ is simply connected, the local K\"ahler immersions glue together to a global K\"ahler immersion of the whole $M$ (see \cite{calabi}). Finally a K\"ahler immersion of a compact K\"ahler manifold into $\mathds C{\rm P}^\infty$ is never full, as the immersion map is constructed using a basis of the space of global holomorphic sections of some holomorphic line bundle, and when $M$ is compact such space is finite dimensional.

\end{remark}

In this paper we are interested in the case when $(M,\omega)$ is compact and K\"ahler--Einstein (from now on KE). If we denote by $\rho$  the Ricci form associated to $\omega$, which can be written as $\rho=-i\partial \bar \partial \log\det g$, then the KE condition reads $\rho=\lambda\omega$, for $\lambda\in \mathds R$.

In \cite{H}  D. Hulin studied compact projectively induced K\"ahler--Einstein manifolds proving the following:
\begin{theor}[{\cite[Theorem 7.1]{H}}]\label{hulin}
Let $(M, g)$ be a compact projectively induced K\"ahler--Einstein manifold of complex dimension $n$. Then $M$ is simply connected, the immersion is global and the Einstein constant $\lambda$ is rational. Furthermore, if we write $\lambda = 2p/q > 0$, where $p/q$ is irreducible, then $p \leq n + 1$, and:
\begin{enumerate}
\item[(i)] if $p = n + 1$, then $(M,g) = (\mathds C{\rm P}^n,qg_{FS})$;
\item[(ii)] if $p = n$, then $(M,g) = (Q_n,qg_{FS})$, where $Q_n$ denotes the quadric in $\mathds C{\rm P}^{n+1}$ of homogeneous equation $Z_0^2+\cdots+Z_{n+1}^2=0$.
\end{enumerate}
\end{theor}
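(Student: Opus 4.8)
The plan is to bridge two classical rigidity theorems: Calabi's rigidity for projective immersions (Theorem~\ref{calabicentralmatrix}) and the Kobayashi--Ochiai characterization of $\mathds C{\rm P}^n$ and the quadric by their Fano index, with the positivity of the Einstein constant as the glue between them. First I would record the consequences of compactness. Since $(M,g)$ is compact and projectively induced, the local K\"ahler immersions are assembled from global holomorphic sections of the positive line bundle $L$ with $c_1(L)$ proportional to $[\omega]$; as $M$ is compact this space is finite dimensional, so by Theorem~\ref{calabicentralmatrix} and Remark~\ref{globalcalabi} the matrix $(b_{jk})$ has finite rank $N$ and $M$ admits a \emph{full} K\"ahler immersion $f\colon M\to\mathds C{\rm P}^N$ with $N<\infty$, unique up to unitary transformations of the ambient space. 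By Chow's theorem $f(M)$ is a smooth projective variety, $L=f^*\mathcal O(1)$ is ample, and $[\omega]=c\,c_1(L)$ for a normalization constant $c>0$.

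The core of the proof is to show that the Einstein constant is positive, so that $M$ is Fano. The Gauss equation for the K\"ahler submanifold $f(M)\subset\mathds C{\rm P}^N$ only yields the upper bound $\lambda\le (N+1)c$, because the second fundamental form decreases the Ricci tensor; the lower bound $\lambda>0$ cannot be extracted from Gauss--Codazzi or from the proportionality of Chern classes, both of which constrain $\lambda$ only from above. Here I would use that a K\"ahler submanifold of $\mathds C{\rm P}^N$ is minimal and that the standard embedding of $\mathds C{\rm P}^N$ into the unit sphere of Hermitian matrices (via orthogonal projections) restricts, along complex submanifolds, to a minimal immersion of $M$ into a sphere; Takahashi's theorem then realizes the ambient coordinate functions, restricted to $M$, as eigenfunctions of $\Delta_M$ for a \emph{positive} eigenvalue determined by the scalar curvature, forcing $\int_M s\,dV>0$ and hence $\lambda>0$. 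This is the substantive analytic point, namely Hulin's positive scalar curvature theorem; the Ricci--flat value $\lambda=0$ is in any case excluded by the Arezzo--Li--Loi result quoted in the Introduction.

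Granting $\lambda>0$, the remaining assertions follow from known structure results. Being Fano, $M$ is simply connected by Kobayashi's theorem, and simple connectivity allows the local immersions to glue into a global one, as in Remark~\ref{globalcalabi}. The K\"ahler--Einstein identity $2\pi\,c_1(M)=\lambda[\omega]=\lambda c\,c_1(L)$ equates two proportional \emph{integral} classes, whence $\lambda\in\mathds Q$; writing $\lambda=2p/q$ in lowest terms, one reads off that $c_1(K_M^{-1})$ is a positive multiple of a primitive ample class, so the Fano index $r$ of $M$ satisfies $r\ge p$. By the Kobayashi--Ochiai theorem $r\le n+1$, giving $p\le n+1$, with $r=n+1$ only for $(\mathds C{\rm P}^n,\mathcal O(1))$ and $r=n$ only for the quadric $(Q_n,\mathcal O(1))$. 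Thus $p=n+1$ forces $r=n+1$ and $M=\mathds C{\rm P}^n$, while $p=n$ forces $r=n$ and $M=Q_n$ (the alternative $r=n+1$ is incompatible with $p=n$, since $\mathds C{\rm P}^n$ realizes $p=n+1$). In each extremal case the embedding has degree one, so Calabi's uniqueness identifies $f$ with the standard linear embedding and pins the metric to $g=q\,g_{FS}$.

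I expect the positivity of $\lambda$ to be the main obstacle: every other ingredient is a citation (Calabi, Chow, Kobayashi, Kobayashi--Ochiai), whereas ruling out $\lambda\le 0$ requires genuine differential--geometric input, precisely because the cohomological and Gauss--Codazzi inequalities bound $\lambda$ only from above. A secondary technical point is the bookkeeping in the two extremal cases, where one must verify that $L=f^*\mathcal O(1)$ is itself primitive, so that $p$ coincides with the Fano index and the Kobayashi--Ochiai classification applies without loss.
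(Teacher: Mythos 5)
The paper does not prove this statement: Theorem \ref{hulin} is quoted verbatim from Hulin \cite{H}, so there is no internal proof to compare yours against, and I assess your plan on its own merits.

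Your endgame is essentially the standard (and correct) one: once a global full K\"ahler immersion $f$ into a finite-dimensional $\mathds C{\rm P}^N$ is available and $\lambda>0$ is known, rationality follows because $2c_1(M)=\lambda\,c_1(f^*\mathcal O(1))$ is an identity between integral classes, simple connectedness follows from Kobayashi's theorem, the bound $p\le n+1$ and the two extremal cases follow from the Kobayashi--Ochiai characterization of $\mathds C{\rm P}^n$ and $Q_n$ by the Fano index, and the metric is pinned down by uniqueness of KE metrics or Calabi rigidity. The fatal gap is exactly the step you single out as the crux: positivity of $\lambda$, and the mechanism you propose cannot deliver it. For a minimal immersion into a sphere of radius $r$, Takahashi's theorem makes the coordinate functions eigenfunctions with eigenvalue $\dim_{\mathds R}M/r^2$ --- a constant depending only on the dimension and the radius, \emph{not} on the scalar curvature --- and everything extractable from the minimal-immersion picture (the Gauss equation, Reilly's bound $\lambda_1\le \dim_{\mathds R}M/r^2$, the Bochner--Lichnerowicz identity, which for a KE metric gives $\lambda\le\lambda_1$) bounds the curvature or $\lambda$ from \emph{above}, exactly like the Gauss--Codazzi estimate you correctly discard. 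No upper bound on $\lambda_1$ can force a lower bound on Ricci, so ``$\int_M s\,dV>0$'' does not follow; and quoting Arezzo--Li--Loi disposes only of $\lambda=0$, leaving $\lambda<0$ wide open. Hulin's argument is of a completely different, diastasis-theoretic nature: for a projectively induced KE metric the Einstein equation integrates, in Bochner coordinates, to $\det(g_{\alpha\bar\beta})=e^{-\frac{\lambda}{2}D_p}$, where $D_p$ is the diastasis; moreover $e^{-D_p}$ is the restriction to $M$ of a globally defined function on $\mathds C{\rm P}^N$ vanishing exactly on the polar hyperplane $H_p$, which must meet the compact algebraic (Chow) submanifold $M$; the sign of $\lambda$ is then read off from the behaviour of the smooth volume form of $M$ along $M\cap H_p$. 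Some argument of this kind is unavoidable.

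Two secondary problems. First, your opening step is circular: you obtain the global full immersion ``from global holomorphic sections of $L$'', but the existence of $L$ (i.e.\ integrality of $[\omega]/\pi$) and the globality of the immersion are among the \emph{conclusions} of the theorem; by Remark \ref{globalcalabi} the local immersions glue only once simple connectedness is known, which in your scheme comes after positivity, which in turn invokes Chow's theorem and hence globality. The honest order is: Calabi continuation of the germ to the universal cover, then positivity, then Myers plus Kobayashi to kill $\pi_1$, and only then globality, Chow, integrality and rationality. Second, in case (ii) your exclusion of $r=n+1$ ``because $\mathds C{\rm P}^n$ realizes $p=n+1$'' is not an argument, since rescalings of $\mathds C{\rm P}^n$ realize many values of $p$; the correct exclusion is arithmetic: $r=n+1$ together with $q\,c_1(M)=p\,c_1(L)$ and $p=n$ forces $n\mid q$, contradicting $\gcd(n,q)=1$ unless $n=1$, where $Q_1\cong\mathds C{\rm P}^1$ anyway.
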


\section{Preliminaries on Sasakian geometry}

We devote this section to recall the main definitions and results about Sasakian manifolds that we will need in the following.
We mainly refer to \cite{BG,S} and references therein.

\begin{defin}
A Riemannian manifold $(S,g)$ is Sasakian if and only if its metric cone $(C(S), \bar g)$ is K\"ahler, where $C(S):=S\times \mathds R^+$ and $\bar g:= dt^2+t^2g$, for $t\in \mathds R^+$.
\end{defin}

It follows that $\dim_{\mathds R}S=2n+1$, where $\dim_{\mathds C}(C(S))=n+1$, and the K\"ahler structure on the cone induces on $S$:
\begin{enumerate}
\item a contact $1$-form $\eta$, i.e. $\eta\wedge (d\eta)^n\neq 0$, that is the restriction to $S$ of the form $d^c\log t$ on the cone.
\item a vector field $\xi$, uniquely defined by the conditions $\eta(\xi)=1$ and $d\eta(\xi,\,\cdot\,)=0$, named {\em Reeb vector field} of $\eta$;
\item  a $(1,1)-$tensor field $\Phi$ which satisfies $\Phi^2=-\mathbb{I}+\xi\otimes\eta$, that is the restriction to the contact distribution $\ker \eta$ of the complex structure of the cone.
\end{enumerate}
It follows that for any vector fields $X, Y$ on $S$ we have 
$$g(\Phi(X),\Phi(Y))=g(X,Y)-\eta(X)\eta(Y),$$
and
$$
d\eta(\Phi X,\Phi Y)=d\eta(X,Y),\quad
d\eta(\Phi X,X)>0,\;\forall X\neq 0.
$$
Let us denote by $\mathcal F$ the {\em Reeb foliation} defined by the integral curves of the Reeb vector field $\xi$. If we denote by $T_{\mathcal F}$ the tangent bundle to $\mathcal F$, the tangent bundle to $S$ splits canonically as $TS=\mathcal D\oplus T_{\mathcal F}$, where $\mathcal D:=\text{ker}\,\eta$ is the codimension one subbundle, with natural almost complex structure defined by $J:=\Phi |_{\mathcal D}$.
 Accordingly, the metric $g$ decomposes as $g=g^T+ \eta\otimes \eta$, where $g^T$ is a metric on the contact distribution called {\em transverse K\"ahler metric}, which can be globally identified with $g^T(\cdot,\cdot)=d\eta(\cdot,\Phi\cdot)$. 

A Sasakian manifold is said to be {\em regular}, {\em quasi-regular} or {\em irregular} if the foliation defined by the Reeb vector field has the corresponding property,  i.e.  respectively each leaves of the Reeb foliation intersect  every  neighborhood of a point  exactly one time, a finite number of times or neither of the previous two cases.
In this paper we are interested in compact regular Sasakian manifolds, for which we recall the following fundamental result (see e.g.  \cite[Theorem 7.5.1]{BG}):
\begin{theor}[Structure Theorem]\label{TheorStructure}
Let $(S,\xi,\eta,\Phi,g)$ be a compact regular Sasakian manifold. Then the space of leaves of the Reeb foliation $(X,\omega)$ is a compact K\"ahler manifold with integral K\"ahler form $\frac{1}{2\pi}\omega$ 
so that the projection $p:(S,g)\rightarrow(X,g_\omega)$ is a Riemannian submersion, where $g_\omega$ is the metric associated to $\omega$.

Viceversa, any principal $S^1$-bundle $S$ with Euler class $-\frac{1}{2\pi}[\omega]\in H^2(X,\mathds Z)$ over a compact K\"ahler manifold $(X,\omega)$ admits a Sasakian structure.
\end{theor}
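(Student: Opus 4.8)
The plan is to exploit the fact that the Reeb vector field $\xi$ is a unit Killing field and that, on a compact manifold, regularity of the Reeb foliation forces its flow to integrate to a \emph{free} action of $S^1$. First I would recall that $\mathcal{L}_\xi g=0$ and $\mathcal{L}_\xi\eta=0$ (the latter because $\eta(\xi)=1$ and $\iota_\xi d\eta=0$), so the flow $\phi_t$ of $\xi$ consists of automorphisms of the entire Sasakian structure. Since $S$ is compact, the closure of $\{\phi_t\}$ in the isometry group $\mathrm{Isom}(S,g)$ is a torus; the definition of regularity for $\mathcal F$, namely that each leaf meets a suitable neighbourhood of any point exactly once, rules out both irrational directions (irregular case) and nontrivial finite isotropy (quasi-regular case), so this torus reduces to a circle acting freely. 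Hence $p\colon S\to X:=S/S^1$ is a smooth principal $S^1$-bundle over a compact manifold $X$, and $p$ is a surjective submersion.

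Next I would show that the transverse data descend to $X$. Because $\iota_\xi d\eta=0$ and $\mathcal{L}_\xi d\eta=0$, the two-form $d\eta$ is basic, so $d\eta=p^*\omega$ for a unique closed two-form $\omega$ on $X$. Similarly, invariance of $\Phi$ under the flow makes the transverse almost complex structure $J=\Phi|_{\mathcal D}$ projectable to an almost complex structure on $X$, and the integrability built into the Sasakian (normality) condition, equivalently the integrability of the complex structure on the K\"ahler cone $C(S)$, guarantees that the projected structure is integrable. The compatibility relations $g^T(\cdot,\cdot)=d\eta(\cdot,\Phi\cdot)$ and $g(\Phi X,\Phi Y)=g(X,Y)-\eta(X)\eta(Y)$ then descend to a K\"ahler metric $g_\omega$ on $X$ with fundamental form $\omega$, so $(X,\omega)$ is a compact K\"ahler manifold. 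Since $g=g^T+\eta\otimes\eta$ and $g^T$ restricts on the horizontal distribution $\mathcal D$ to the pullback of $g_\omega$, the map $p$ is a Riemannian submersion.

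To obtain integrality I would read $\eta$ as a connection one-form on the principal circle bundle $p\colon S\to X$, its curvature being precisely $d\eta=p^*\omega$. By Chern--Weil theory the Euler (first Chern) class of the bundle is represented by $-\tfrac{1}{2\pi}d\eta$, which shows at once that $\tfrac{1}{2\pi}[\omega]\in H^2(X,\mathds Z)$ and that the Euler class equals $-\tfrac{1}{2\pi}[\omega]$. For the converse, given a compact K\"ahler $(X,\omega)$ with $\tfrac{1}{2\pi}[\omega]$ integral, I would take the principal $S^1$-bundle with Euler class $-\tfrac{1}{2\pi}[\omega]$, choose a connection one-form $\eta$ whose curvature is $p^*\omega$ (possible precisely because the classes match), let $\xi$ generate the $S^1$-action, lift $J$ horizontally while setting $\Phi\xi=0$, and put $g=p^*g_\omega+\eta\otimes\eta$; a direct check of the Sasakian structure equations, or more efficiently verifying that the metric cone $(C(S),dt^2+t^2g)$ is K\"ahler, shows that $(S,\xi,\eta,\Phi,g)$ is Sasakian.

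The step I expect to be most delicate is the first one: converting the foliation-theoretic notion of regularity into a genuinely free, smooth $S^1$-action with a Hausdorff manifold quotient, since this is exactly where compactness and the precise definition of regularity must be invoked to exclude the quasi-regular and irregular behaviours. Once the principal bundle structure is in place, the descent of $\omega$, $J$ and $g_\omega$, together with the Chern--Weil identification of the Euler class, is essentially formal.
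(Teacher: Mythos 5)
The paper itself offers no proof of this statement: it is quoted (as a known result) from \cite[Theorem 7.5.1]{BG}, so the only meaningful comparison is with the standard Boothby--Wang/Hatakeyama argument underlying the citation. Your proposal reconstructs exactly that argument, and its outline is sound: compactness places the Reeb flow in the compact group ${\rm Isom}(S,g)$, regularity collapses the closure of the flow to a freely acting circle, and the slice theorem gives the principal $S^1$-bundle $p\colon S\to X$; invariance and horizontality let $d\eta$, $\Phi|_{\mathcal D}$ and $g^T$ descend, with integrability of the quotient complex structure coming from normality (equivalently from integrability of the complex structure on the cone); the converse is Hatakeyama's construction from a connection form with curvature $p^*\omega$.

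The one step that does not hold up as literally written is the Chern--Weil normalization. The closure-of-the-flow argument produces a circle of \emph{some} period $T>0$, not necessarily $2\pi$; the form $\eta$ is a connection one-form only for the parametrization of the structure group by $\mathds R/T\mathds Z$. In the standard $2\pi$-convention the connection form is $\tfrac{2\pi}{T}\eta$, so Chern--Weil gives Euler class $-\tfrac1T[\omega]$, i.e.\ integrality of $\tfrac1T[\omega]$, not of $\tfrac{1}{2\pi}[\omega]$. Nothing in the Sasakian axioms pins $T=2\pi$: a $\mathcal D_a$-homothetic transformation \eqref{da} produces again a regular Sasakian structure but rescales the period to $aT$ and the descended form to $a\omega$, so for irrational $a$ the class $\tfrac{1}{2\pi}[\omega]$ of the transformed structure cannot be integral (while $\tfrac{1}{aT}[a\omega]=\tfrac1T[\omega]$ stays integral, as it must, being the topological Euler class). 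So the constant $\tfrac{1}{2\pi}$ in the statement is a normalization convention; the invariant content --- and the form in which the paper actually uses the theorem, writing $p^*\omega=\tfrac a2 d\eta$ for some $a>0$ with $\omega$ integral --- is that the quotient K\"ahler class is a positive multiple of an integral class, i.e.\ $X$ is Hodge. Your proof is complete once you either keep track of the period $T$ and assert integrality of $\tfrac1T[\omega]$, or normalize the structure by the corresponding $\mathcal D_a$-homothety before applying Chern--Weil.
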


We will call $(X,g_\omega)$ in Theorem \ref{TheorStructure}  the \emph{K\"ahler manifold associated} to $S$.

This theorem allows us to describe the K\"ahler structure of the cone over $S$ through the dual of a positive line bundle over $X$ in the following way (see e.g. \cite[p. 6]{LPZ}). Assume that $S$ is  a principal circle bundle $p\!:S\rightarrow X$ over a compact Hodge K\"ahler manifold $(X,\omega)$, namely a compact manifold $X$ with integral K\"ahler form $\omega$, such that $p^*\omega=\frac 12d\eta$. The K\"ahler class of $\omega$ defines an ample line bundle $L$ over $X$ such that $C(S)=L^{-1}\setminus\{0\}$, where $L^{-1}$ is the line bundle dual to $L$. Furthermore, there exists a Hermitian metric $h$ on $L$ such that $\omega=-i\partial \bar\partial \log h$ and the dual metric $h^{-1}$ on $L^{-1}$ defines the radial coordinate of the cone in the following way:
\begin{equation}\label{t}
t\!:L^{-1}\setminus \{0\}\rightarrow \mathds R^+,\quad (x,v)\mapsto |v|_{h_x^{-1}} \ \ (v\in L_x^{-1}).
\end{equation}
The K\"ahler metric $\bar g$ on the cone has K\"ahler form $ \Omega=\frac i2\partial\bar \partial t^2$.

We recall that for $a>0$, by a $\mathcal{D}_a$\textit{-homothetic} trasformation 
of $(S,\xi,\eta,\Phi,g)$ we
mean a change of the structure tensors in the following way (see \cite{tanno}):
\begin{equation}\label{da}
\eta_a=a\eta,\quad \xi_a=\frac{1}{a}\xi,\quad \Phi_a=\Phi, \quad g_a=ag+a(a-1)\eta\otimes\eta.
\end{equation}
It is important to notice that if we perform a $\mathcal D_a$-homothetic transformation of the Sasakian structure $(\xi,\eta,\Phi,g)$ over $S$, then the new Sasakian structure is still regular and the complex manifold below is still $X$, but its K\"ahler form $\omega_a$ is $\omega$ rescaled by $a$, so that $p^*\omega_a=ap^*\omega=\frac a2d\eta$.

Observe that setting a new coordinate ${t'}=t^a$ on the K\"ahler cone induces the same structure $(\xi_a,\eta_a,\Phi_a,g_a)$ on $S$.\par

We conclude this section with the following theorem (see for instance \cite[Th. 11.1.3, Lemma 11.1.5, Cor. 11.1.8]{BG}) that summarizes what we need about Sasakian--Einstein manifolds, namely Sasakian manifolds whose metric $g$ is Einstein in the Riemannian sense, i.e. ${\rm Ric}_g=\lambda g$, for a constant $\lambda$.
\begin{theor}\label{SE}
Let $(S,\xi,\eta,\Phi,g)$ be a Sasakian manifold, then the following are equivalent:
\begin{enumerate}
\item the metric $g$ is Sasakian--Einstein with Einstein constant $2n$;
\item the metric $\bar g$ on the K\"ahler cone is Ricci--flat.
\end{enumerate}
In addition, if $S$ is compact with associated K\"ahler manifold $(X, \omega)$, then the previous conditions are equivalent to $\omega$ being K\"ahler--Einstein with Einstein constant $2n+2$.
\end{theor}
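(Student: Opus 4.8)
The plan is to establish the two equivalences by different means: the equivalence of (1) and (2) is a general property of the Ricci tensor under the metric cone construction, while the equivalence with the condition on $(X,\omega)$ in the compact case follows by pushing the Einstein equation down through the transverse K\"ahler geometry and the Riemannian submersion of Theorem \ref{TheorStructure}.

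For (1) $\Leftrightarrow$ (2) I would first record the behaviour of the Ricci tensor under the cone construction. Setting $m=\dim_{\mathds R}S=2n+1$ and using the warped--product form $\bar g=dt^2+t^2g$, a direct computation shows that ${\rm Ric}_{\bar g}$ annihilates the radial direction $\partial_t$ and that, on vectors tangent to $S$,
$$
{\rm Ric}_{\bar g}={\rm Ric}_g-(m-1)\,g={\rm Ric}_g-2n\,g.
$$
Hence $\bar g$ is Ricci--flat exactly when ${\rm Ric}_g=2n\,g$, which is the Sasakian--Einstein condition with Einstein constant $2n$, giving (1) $\Leftrightarrow$ (2).

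For the link with the compact case I would use two identities valid on any Sasakian manifold: along the Reeb direction one always has ${\rm Ric}_g(\xi,\,\cdot\,)=2n\,\eta$, while on the contact distribution $\mathcal D=\ker\eta$ the Sasakian and transverse Ricci tensors are related by
$$
{\rm Ric}_g(X,Y)={\rm Ric}^T(X,Y)-2\,g^T(X,Y),\qquad X,Y\in\mathcal D.
$$
The first identity forces any Sasakian--Einstein metric to have constant $2n$, so that imposing (1) amounts to imposing the Einstein equation only on $\mathcal D$; substituting into the second identity, the equation ${\rm Ric}_g=2n\,g$ on $\mathcal D$ is equivalent to ${\rm Ric}^T=(2n+2)\,g^T$. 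When $S$ is compact and regular, Theorem \ref{TheorStructure} yields the Riemannian submersion $p\!:(S,g)\to(X,g_\omega)$ whose horizontal metric is $g^T$ and, since in the regular case the transverse curvature is a basic tensor descending to the base, ${\rm Ric}^T$ is the pullback of the Ricci tensor of $g_\omega$. Therefore ${\rm Ric}^T=(2n+2)\,g^T$ descends to ${\rm Ric}_\omega=(2n+2)\,g_\omega$, i.e. $\omega$ is K\"ahler--Einstein with Einstein constant $2n+2$.

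All the ingredients are classical, so the work is mostly bookkeeping: one must track three distinct Ricci tensors (on $S$, on the cone, and on $X$), the shift $-2n$ coming from the cone and the shift $-2$ coming from the transverse relation, and check that the transverse quantities genuinely descend through $p$. The one step requiring actual curvature computation, and hence the main obstacle, is the transverse identity ${\rm Ric}_g={\rm Ric}^T-2\,g^T$ on $\mathcal D$, which rests on the curvature identities of the Sasakian structure; once it is available the three--way equivalence is immediate.
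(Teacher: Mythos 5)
Your proof is correct: the cone formula ${\rm Ric}_{\bar g}={\rm Ric}_g-2n\,g$ on lifts of vectors tangent to $S$, the identity ${\rm Ric}_g(\xi,\,\cdot\,)=2n\,\eta$, and the transverse relation ${\rm Ric}_g={\rm Ric}^T-2\,g^T$ on $\mathcal D$ (together with the fact that in the compact regular case $g^T$ and ${\rm Ric}^T$ descend through the submersion of Theorem \ref{TheorStructure}) combine exactly as you say, with all constants matching. The paper gives no proof of its own but cites this statement from \cite{BG} (Th.\ 11.1.3, Lemma 11.1.5, Cor.\ 11.1.8), and your argument is precisely the standard one underlying those cited results, so it is essentially the same approach.
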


\section{K\"ahler potentials related to regular Sasakian manifolds}

Consider a compact regular Sasakian manifold $(S,\xi,\eta,\Phi,g)$. By  Theorem \ref{TheorStructure}, $S$ is a principal circle bundle $p\!:S\rightarrow X$ over a compact Hodge K\"ahler manifold $(X,\omega)$ such that $p^*\omega=\frac{a}{2}d\eta$, for some $a>0$.%, {\color{blue}where the notation $\omega_a$ for the integral form on $X$ is justified by the notation in the next lemma.}

\begin{lemma}\label{giulio}
Let $(S,\xi,\eta,\Phi,g)$ be a compact regular Sasakian manifold and let $(X,\omega_a)$ be its associated K\"ahler manifold.
Assume $p^*\omega=\frac a2d\eta$, $a>0$. If $\omega=\frac i2\partial\bar \partial \psi$ on a open set $U$ around $x\in X$, then the K\"ahler metric $\Omega$ on the cone over $S$ is given by $\Omega=\frac i2\partial\bar \partial \!\left(|z_0|^{\frac2a}e^{\frac1a \psi}\right)$ on $ \mathds C\setminus\{0\}\times U\subset C(S)$, where $z_0$ is the coordinate on $\mathds C\setminus \{0\}$.
\end{lemma}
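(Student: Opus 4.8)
The plan is to work in a local holomorphic trivialization and to compute the radial function $t$ explicitly, reducing the statement to an identity of K\"ahler potentials modulo pluriharmonic terms. First I would use the description $C(S)=L^{-1}\setminus\{0\}$ recalled above: over the open set $U$ choose a local holomorphic frame of $L^{-1}$, so that $(z_0,z_1,\dots,z_n)$ --- with $z_0$ the fibre coordinate on $\mathds C\setminus\{0\}$ and $z_1,\dots,z_n$ holomorphic coordinates on $U$ --- are holomorphic coordinates on $\mathds C\setminus\{0\}\times U\subset C(S)$. Since $\Omega=\frac i2\partial\bar\partial t^2$, it suffices to show that, in these coordinates, $t^2$ and $|z_0|^{\frac2a}e^{\frac1a\psi}$ differ by the real part of a holomorphic function, as such a summand is annihilated by $\frac i2\partial\bar\partial$.

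The function $t^2=|v|_{h^{-1}}^2$ is $S^1$--invariant and positive, so it factors as $t^2=|z_0|^{2s}\,e^{G}$ for some real exponent $s$ and some function $G$ on $U$ (the pluriharmonic factor $\log|z_0|^2$ being harmless). The exponent $s$ is dictated by the homogeneity of $t$ under the cone dilation: for the integral structure one has $s=1$, and the relation $p^*\omega=\frac a2 d\eta$ amounts, via the $\mathcal D_a$--rescaling $t\mapsto t^a$ recalled above, to replacing the radial coordinate by its $1/a$--power, which forces $s=1/a$. Thus $t^2=|z_0|^{2/a}e^{G}$, and the whole content of the lemma is the identification $G=\frac1a\psi$ up to pluriharmonic terms.

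To pin down $G$ I would use $\eta=d^c\log t$ together with the hypothesis $p^*\omega=\frac a2 d\eta$ and $\omega=\frac i2\partial\bar\partial\psi$. Writing $\log t=\frac1{2a}\log|z_0|^2+\frac12 G$ and applying $dd^c$, the pluriharmonic term $\log|z_0|^2$ drops and one obtains $\frac a2\,dd^c\!\left(\tfrac12 G\right)=p^*\omega=\frac i2\partial\bar\partial\psi$; comparing $\partial\bar\partial$ and using the curvature normalisation $\omega=-i\partial\bar\partial\log h$ to fix the overall constant then gives $\partial\bar\partial G=\frac1a\partial\bar\partial\psi$, i.e. $G=\frac1a\psi$ modulo the real part of a holomorphic function. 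This last ambiguity is exactly the freedom in the choice of local frame of $L^{-1}$ (equivalently, the freedom in $\psi$), and since it is invisible to $\frac i2\partial\bar\partial$ it does not affect the conclusion. Substituting $t^2=|z_0|^{2/a}e^{\psi/a}$ into $\Omega=\frac i2\partial\bar\partial t^2$ yields the claim.

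The main obstacle I expect is purely the bookkeeping of constants: correctly combining the conventions for $d^c$ and $dd^c$, the normalisations $\omega=\frac i2\partial\bar\partial\psi$ and $\omega=-i\partial\bar\partial\log h$, and above all the $\mathcal D_a$--rescaling $t\mapsto t^a$, so as to land precisely on the exponents $2/a$ and $1/a$ rather than on some other combination. A reassuring cross--check, which could even replace the direct computation, is to argue in reverse: set $\Phi:=|z_0|^{2/a}e^{\psi/a}$ and $t:=\Phi^{1/2}$, verify that $\eta:=d^c\log t$ satisfies $d\eta=\frac2a\,p^*\omega$ and that $\frac i2\partial\bar\partial\Phi$ is a cone metric whose restriction to $\{t=1\}$ is the given Sasakian structure; the uniqueness of the K\"ahler cone over a Sasakian manifold then forces $\frac i2\partial\bar\partial\Phi=\Omega$.
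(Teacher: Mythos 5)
Your overall architecture matches the paper's: both arguments work in a local trivialization of $L^{-1}$, both use the $\mathcal D_a$--rescaling $t\mapsto t^{1/a}$ to produce the exponent $2/a$, and both reduce the lemma to identifying the fibre--independent factor of $t^2$ with $e^{\psi/a}$. Where you differ is the mechanism of that identification: the paper computes the dual Hermitian metric $h^{-1}$ explicitly, obtaining $t^2=|z_0|^2/h(\sigma,\sigma)$ in the dual frame, and then takes $\psi:=-\log h(\sigma,\sigma)$ as the potential, so that the formula is an identity by construction; you instead derive the identification from the structural equation $p^*\omega=\frac a2\,d\eta=\frac a2\,dd^c\log t$, which pins down your function $G$ only up to a pluriharmonic summand. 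That is a legitimate, arguably more conceptual route, but it is exactly where your argument has a genuine gap.

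The gap is in the last step. You conclude $G=\frac1a\psi+\operatorname{Re}F$ with $F$ holomorphic on $U$, and then assert that this ambiguity ``is invisible to $\frac i2\partial\bar\partial$'' and hence harmless. But the ambiguity does not enter the cone potential additively: what your argument yields is $t^2=|z_0|^{2/a}e^{\psi/a}\,e^{\operatorname{Re}F}$, and the difference $|z_0|^{2/a}e^{\psi/a}\bigl(e^{\operatorname{Re}F}-1\bigr)$ is \emph{not} the real part of a holomorphic function of $(z_0,z)$, so $\frac i2\partial\bar\partial t^2$ and $\frac i2\partial\bar\partial\bigl(|z_0|^{2/a}e^{\psi/a}\bigr)$ genuinely differ as forms in these coordinates. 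In particular the reduction announced in your first paragraph (``it suffices to show the two potentials differ by the real part of a holomorphic function'') is never actually fulfilled. The repair is to absorb the discrepancy into the fibre coordinate rather than discard it: since $e^{\operatorname{Re}F}=\bigl|e^{aF/2}\bigr|^{2/a}$, the holomorphic change of trivialization $w_0=z_0\,e^{aF(z)/2}$ of $L^{-1}$ gives $t^2=|w_0|^{2/a}e^{\psi/a}$ exactly. So the lemma's formula holds for the fibre coordinate \emph{matched} to the chosen potential $\psi$; this matching is precisely what the paper builds in from the start by defining $\psi=-\log h(\sigma,\sigma)$ in the same frame that defines $z_0$. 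With this correction (and with the conventions $\eta=d^c\log t$, $dd^c=2i\partial\bar\partial$, under which the constants you worried about do come out right), your argument becomes a complete proof.
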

\begin{proof}
Following the line bundle construction in the previous section, $S$ is a principal circle bundle over a compact Hodge K\"ahler manifold $(X,\omega)$ and we can construct an ample line bundle $\pi\!: L\rightarrow X$ over $X$ endowed with a Hermitian metric $h$ such that $\omega=-i\partial\bar \partial\log  h$. Observe that $\Omega_a=\frac i2\partial \bar \partial t^{2}$ is the K\"ahler metric on the cone over the Sasakian manifold $(S,\xi_a, \eta_a,\Phi,g_a)$, where $t$ is given by \eqref{t}. Since to move from $\eta_a$ to $\eta$ we need a $\mathcal D_{1/a}$ homothetic transformation, which corresponds on the cone to the change of variable $t'=t^{1/a}$, the K\"ahler metric on the cone over $(S,\xi,\eta,\Phi,g)$ is $\Omega=\frac i2\partial \bar \partial t^{2/a}$.

Consider a trivialization $\{U,\sigma\}$ of the line bundle $L$, where $U$ is an open neighborhood of $x\in X$ and $\sigma\!:U\rightarrow \pi^{-1}(U)$, $x\mapsto (x,\sigma(x))$, is a trivializing section. Recall that $\sigma$ defines an isomorphism $\pi^{-1}(U)\rightarrow  \mathds C\times U$ by $(w,x)\mapsto (\alpha_w,x)$, with $w\in L_x$, and $\alpha_w\in \mathds C$ defined by $w=\alpha_w\sigma(x)$. Thus, any other section $s\!:U\rightarrow \pi^{-1}(U)$, $x\mapsto (s(x),x)$ can be written as $s(x)=f(x)\sigma(x)$ with $f\!:U\rightarrow \mathds C$ holomorphic. 

On the dual bundle $\hat \pi\!:L^{-1}\rightarrow X$ denote by $\sigma^{-1}$ the section dual to $\sigma$, i.e. $\sigma^{-1}\!:U\rightarrow \hat \pi^{-1}(U)$ with $\sigma^{-1}(x)\!:L_x\rightarrow \mathds C$ satisfying $\sigma^{-1}(x)(\sigma(x))=1$. By linearity, for any $w=\alpha_w\sigma(x)\in L_x$,  $\sigma^{-1}(x)(w)=\sigma^{-1}(x)(\alpha_w\sigma(x))=\alpha_w$. As before any other section can be written as $\hat s(x)=\hat f(x)\sigma^{-1}(x):L_x\rightarrow \mathds C$. Observe that evaluating at $\sigma(x)$ we get $\hat f(x)=\hat s(x)(\sigma(x))$. In other words, in the coordinate system defined by $\sigma^{-1}$, the coordinate of $\hat s(x)$ is $\hat s(x)(\sigma(x))$.

The dual Hermitian metric $h^{-1}$ is defined on $x\in U$ by:
$$
h_x^{-1}\!:L_x^{-1}\times L_x^{-1}\rightarrow \mathds C,\quad h_x^{-1}(\hat s(x),\hat s(x)):=h_x(h_x^{\sharp}(\hat s(x)),h_x^{\sharp}(\hat s(x))),
$$
where $h^\sharp\!:L_x^{-1}\rightarrow L_x$ is the inverse of the canonical isomorphism induced by $h$ on the fibers, $h_x^\flat\!:L_x\rightarrow  L_x^{-1}$, $w\mapsto h_x(w,\cdot)$. Since $h_x^{\sharp}(\hat s(x))\in L_x$ we have for some $\beta\in \mathds C$,  $h_x^{\sharp}(\hat s(x))=\beta \sigma(x)$. Furthermore, the function $h_x(w,\cdot)\!: L_x\rightarrow \mathds C$ applied to $\sigma(x)$ gives:
$$
h_x(w,\sigma(x))=h_x(\alpha_w\sigma(x),\sigma(x))=\alpha_w  h_x(\sigma(x),\sigma(x)),
$$
so applying $h_x^\flat(h_x^{\sharp}(\hat s(x)))=\hat s(x)$ to $\sigma(x)$ we get:
$$
h_x^\flat(h_x^{\sharp}(\hat s(x)))(\sigma(x))=\beta h_x(\sigma(x),\sigma(x))=\hat s(x)(\sigma(x)).
$$
Thus,
$$
h_x^{\sharp}(\hat s(x))=\frac{\hat s(x)(\sigma(x))}{h_x(\sigma(x),\sigma(x))}\sigma(x),
$$
which implies:
$$
h_x^{-1}(\hat s(x),\hat s(x))=\frac{|\hat s(x)(\sigma(x))|^2}{h_x(\sigma(x),\sigma(x))}.
$$
Then in \eqref{t}, if we denote by $z_0$ the coordinate of a vector $v\in L_x^{-1}$ with respect to $\sigma^{-1}$ we have:\\
$$
|v|^{2}_{h_x^{-1}}=h_x^{-1}(v,v)=\frac{|v(\sigma(x))|^2}{h_x(\sigma(x),\sigma(x))}=\frac{|z_0|^2}{h_x(\sigma(x),\sigma(x))},
$$
and on $U$ and $\mathds C\setminus\{0\}\times U$, we have:
$$
\Omega=\frac i2\partial\bar\partial \frac{|z_0|^{2/a}}{h_x(\sigma(x),\sigma(x))^{1/a}},\quad \omega=-\frac i2\partial\bar \partial \log h_x(\sigma(x),\sigma(x)).
$$
Notice that this construction is independent of the choice of the trivializing section, as the linear bundle is Hermitian and thus the transition functions are unitary.
The conclusion follows by noticing that $\psi= \log h_x(\sigma(x),\sigma(x))^{-1}$ implies $|z_0|^{\frac2a}e^{\frac1a\psi}=\frac{|z_0|^{\frac2a}}{h_x(\sigma(x),\sigma(x))^{\frac1a}}$.
\end{proof}

\begin{ex}\label{hey}\rm
If $\omega=\omega_{FS}$  is the Fubini-Study metric on $X=\mathbb{C}P^n$, then $\psi=\log(1+||z||^2)$ lifts to $|z_0|^2(1+||z||^2)$. The K\"ahler cone corresponding to $(\mathds C{\rm P}^n,\omega_{FS})$ is $(\mathds C^{n+1}\setminus\{0\},\omega_0)$. In our notation one obtains the canonical potential for the flat metric, i.e. $|z_0|^2+||z||^2$, after performing the holomorphic change of variables $(z_0,z_1,\dots, z_n)\mapsto \left(z_0,\frac{z_1}{z_0},\dots, \frac{z_n}{z_0}\right)$.\end{ex}

\begin{remark}\rm
It is worth pointing out that a similar construction holds in the noncompact setting. In fact, due to \cite[Sec. 5]{LPZ} a regular noncompact Sasakian manifold is a principal bundle over a {\em noncompact} K\"ahler manifold $(X,\omega)$. More precisely, the fiber of a noncompact Sasakian manifold over a compact $X$ would be $\mathds R$, the principal bundle is forced to be trivial and the K\"ahler form on $X$ must be exact, which is impossibile if $X$ is compact. Thus a Sasaki noncompact manifold is either a circle bundle over a noncompact K\"ahler manifold $(X,\omega)$ or it is the product $X\times \mathds R$.
In both cases, if $\omega$ is integral, we construct a positive Hermitian line bundle $(L,h)$ over $X$ such that $\omega=-i\partial\bar\partial h$. Then Lemma \ref{giulio} holds for noncompact Sasakian manifolds.
\end{remark}
\section{Projectively induced K\"ahler cones}

We begin by proving the following proposition, interesting in its own sake and a key step in the proof of our main theorems.
\begin{prop}\label{amelia}
Let $U$ be  a contractible domain in $\mathds C^n$, endowed with a K\"ahler metric $\omega=\frac i2\partial \bar \partial \psi$. Consider on $ \mathds C\setminus\{0\}\times U$ the K\"ahler metric $ \Omega_c=\frac i2\partial\bar \partial (|z_0|^{2c}e^{c\psi})$ for a constant $c>0$, where $z_0$ is the coordinate on $\mathds C\setminus \{0\}$. Then for any $c>0$, $\Omega_c$ is projectively induced if and only if $c\,\omega$ is.
\end{prop}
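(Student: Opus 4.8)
The plan is to reduce both instances of projective inducedness to Calabi's criterion (Theorem \ref{calabicentralmatrix}), phrased in the following streamlined form that I will use on both sides: a real-analytic K\"ahler manifold with local K\"ahler potential $\Phi$ is projectively induced if and only if $e^{\Phi}$ is locally a sum of squares of moduli of holomorphic functions, i.e. the sesqui-holomorphic extension $e^{\Phi(z,\bar w)}$ is a positive semidefinite kernel. Indeed, writing a potential as $\Phi=D_p+g+\bar g$ with $g$ holomorphic, one has $e^{\Phi}=|e^{g}|^2e^{D_p}=|e^g|^2\big(1+(e^{D_p}-1)\big)$, so by Theorem \ref{calabicentralmatrix} the matrix $(b_{jk})$ of $e^{D_p}-1$ is positive semidefinite exactly when $e^{\Phi}$ is such a sum of squares; conversely an expression $e^{\Phi}=\sum_j|h_j|^2$ says precisely that $z\mapsto[h_0(z):h_1(z):\dots]$ pulls back $\omega_{FS}$ to $\omega$. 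I may assume $\omega$ real-analytic, since otherwise neither $c\,\omega$ nor $\Omega_c$ can be projectively induced.

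With this criterion, $c\,\omega=\frac i2\partial\bar\partial(c\psi)$ is projectively induced if and only if the kernel $E:=e^{c\psi}$ is positive semidefinite. For the cone I would first perform the local biholomorphic change of coordinate $v=z_0^c$ on a simply connected piece of $\mathds C\setminus\{0\}$ (projective inducedness is local and coordinate-free), turning the potential $|z_0|^{2c}e^{c\psi}$ into $|v|^2e^{c\psi}$; hence $\Omega_c$ is projectively induced if and only if the kernel $e^{v\overline{v'}E(z,\overline{w'})}$ on $\mathds C\setminus\{0\}\times U$ is positive semidefinite.

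The heart of the argument is the elementary observation that
$$
e^{v\overline{v'}E}=\sum_{a\ge 0}\frac{1}{a!}\,(v\overline{v'})^a\,E^a ,
$$
so the kernel carries no mixed term $v^a\overline{v'}^b$ with $a\neq b$. Expanded at $v=0$, its Calabi matrix is therefore block-diagonal with respect to the powers of $v$, the $a$-th block being the matrix of $\frac1{a!}E^a$. A block-diagonal Hermitian matrix is positive semidefinite if and only if each block is, so $e^{v\overline{v'}E}$ is a positive kernel if and only if $E^a$ is a positive kernel for every $a\ge0$. The forward implication then follows from the Schur product theorem: if $E$ is positive then so is each power $E^a$, whence $e^{v\overline{v'}E}$ is positive and $\Omega_c$ is projectively induced (in fact $|v|^2E=\|v f\|^2$ when $E=\|f\|^2$, so $\Omega_c$ is even induced by the flat metric $\omega_0$). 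For the converse, positivity of $e^{v\overline{v'}E}$ forces every diagonal block to be positive semidefinite; the block $a=1$ is exactly the matrix of $E$, so $E$ is a positive kernel and $c\,\omega$ is projectively induced.

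The main obstacle, and the only genuinely delicate point, is that the block-diagonal structure is read off at $v=0$, which is not a point of $\mathds C\setminus\{0\}\times U$. I would bridge this by observing that $e^{v\overline{v'}E}$ is real-analytic on all of $\mathds C\times U$ and equals $1$ along $\{v=0\}$; being a positive kernel on the dense open subset $\{v\neq0\}$, it remains one on $\mathds C\times U$ by continuity of the defining quadratic-form inequalities (equivalently, the holomorphic functions $\Psi_j$ in a decomposition $e^{v\overline{v'}E}=\sum_j|\Psi_j|^2$ satisfy $\sum_j|\Psi_j|^2\le e^{E}$ near $v=0$, hence are locally bounded and extend across $v=0$ by Riemann's removable singularity theorem). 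This legitimizes expanding at $v=0$ and taking the $a=1$ principal block, completing the converse. Contractibility of $U$ ensures $\psi$ and all the holomorphic data are globally defined and that the local immersions assemble into global ones.
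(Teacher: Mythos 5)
Your forward implication is sound and is essentially the paper's own argument (there the sum of squares $|z_0|^{2c}e^{c\psi}=\sum_j|z_0^cf_j|^2$ is exponentiated; your Schur--product phrasing is equivalent). The converse, however, has a genuine gap at exactly the step you flag as delicate. Being projectively induced is by definition a \emph{local} property: it gives, around each point with $v\neq 0$, only a small neighborhood on which $e^{v\overline{v'}E}$ is a positive kernel. Your continuity argument needs much more, namely positivity of the quadratic forms $\sum_{i,j}c_i\bar c_j K(q_i,\bar q_j)$ for tuples $q_1,\dots,q_r$ clustered around $(0,p)$ but spread in argument around the puncture; such a tuple cannot be approximated by tuples lying inside any single small neighborhood where positivity is known. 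So one must first upgrade local positivity to positivity on a region large enough to be dense near $(0,p)$. This upgrade is not automatic: by Remark \ref{globalcalabi}, Calabi's local immersions glue only on \emph{simply connected} domains, and $(\mathds C\setminus\{0\})\times U$ is not simply connected, so a priori the local sums of squares carry a unitary monodromy around $v=0$. The same issue undermines your alternative bridge: a decomposition $e^{v\overline{v'}E}=\sum_j|\Psi_j|^2$ valid on a whole punctured neighborhood, to which you want to apply Riemann's removable singularity theorem, presupposes exactly the semi-global statement that is missing, since the $\Psi_j$ need not exist as single-valued functions across the puncture.

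The gap is fixable, but it requires a real additional argument. For instance, take a sector $S=\{v\neq 0,\ |\arg v-\theta|<\pi-\delta\}$: then $S\times U$ is contractible, so by Remark \ref{globalcalabi} the local immersions glue to a global one, which lifts to $\ell^2(\mathds C)\setminus\{0\}$ because the relevant line bundle is trivial on a contractible set; this yields a genuine global decomposition of $e^{|v|^2E}$ on $S\times U$, hence kernel positivity there, and since $S$ is dense in $\mathds C$ your continuity argument then applies and the block-diagonal extraction at $v=0$ becomes legitimate. (Alternatively, pull back to the universal cover $\mathds C\times U\to(\mathds C\setminus\{0\})\times U$, $(w,z)\mapsto(e^w,z)$, glue there, and note that kernel positivity descends since the kernel values depend only on the base points.) It is worth observing that the paper's proof sidesteps this difficulty altogether: it never expands at the puncture, but computes the diastasis of $\Omega_c$ at an honest interior point $q=(\epsilon,p)$, extracts the principal submatrix indexed by $m_j=(1,\hat m_j)$, and lets $\epsilon\to 0$; positivity of principal submatrices is inherited pointwise, so no local-to-global or extension step is ever needed. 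Your block-diagonal observation is attractive and structurally cleaner, but as written the proof is incomplete at precisely the point where the paper's computation does the work.
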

\begin{proof} Observe that $\Omega_c$ is real analytic if and only if $c\omega$ is.
Set coordinates $z$ around a point $p\in U$ such that $z(p)=0$.

Assume first that $c\omega$ is projectively induced, i.e. there exist holomorphic functions $f_j\!:U\rightarrow \mathds C$ such that $e^{c\psi}=\sum_{j=0}^\infty |f_j|^2$, with $f_0\equiv1$ and for all $j\geq 1$, $f_j(0)=0$. Then we have,
$$
|z_0|^{2c}e^{c\psi}=|z_0|^{2c}\sum_{j=0}^\infty |f_j|^2=\sum_{j=0}^\infty |z_0^cf_j|^2,
$$ 
and thus:
$$
e^{|z_0|^{2c}e^{c\psi}}=\sum_{j=0}^\infty|k_j(z_0,z)|^2
$$
for some holomorphic functions $k_j$ defined on $\mathds C\setminus\{0\}\times U$ and such that $k_0=e^{|z_0|^{2c}}$. It follows that $k\!:\mathds C\setminus\{0\}\times U\rightarrow \mathds C{\rm P}^\infty$, $k(z_0,z)=\left[k_0:k_1:\dots:k_j:\dots\right]$ is a K\"ahler immersion and thus $(\mathds C\setminus\{0\}\times U,\Omega_c)$ is projectively induced.

Assume now $\Omega_c$ to be projectively induced.  We can assume without loss of generality that $\psi$ is the diastasis function for $\omega$ centered at $p$, i.e. $\psi(0)=0$. By Theorem \ref{calabicentralmatrix}, the matrix of coefficients $(B_{jk})$ in the power expansion around the origin of the diastasis function of $\Omega_c$
is positive semidefinite. We need to show that $(B_{jk})\geq 0$ implies $(b_{jk})\geq 0$, where $(b_{jk})$ is the matrix of coefficients in the power expansion of $e^{c\psi}-1$ (cf. \eqref{expansion}). 

Let  $q=(\epsilon,p)\in \mathds C\setminus\{0\}\times U$, with $\epsilon\in \mathds R^+$. Perform a change of coordinates $z_0\rightarrow z_0-\epsilon$ (we avoid to change the name of the variable to simplify the notation), so that $q$ is the point of coordinates $(0,0)$. Observe that in this coordinates a potential for $\Omega_c$ reads:
$$
\Phi(z_0,z)= |z_0+\epsilon|^{2c}e^{c\psi(z)},
$$
and the diastasis for $\Omega_c$ centered at $q$ is given by:
$$
D_q(z_0,z)= |z_0+\epsilon|^{2c}e^{c\psi\left(z\right)}- \epsilon^c(z_0+\epsilon)^c-\epsilon^c(\bar z_0+\epsilon)^c+\epsilon^2.
$$
Observe that if $(B_{jk})$ is positive semidefinite, then any submatrix obtained from $(B_{jk})$ collecting the $a_1,\dots, a_l$-th rows and columns also is. In particular the matrix $(u_{jk})$ has to be positive semidefinite, where
\begin{equation}\label{francesca}
u_{jk}=\left[\frac{1}{\hat m_j!\hat m_k!}\frac{\partial^{|\hat m_j|+|\hat m_k|}}{\partial z^{\hat m_j}\bar z^{\hat m_k}}\left(\frac{\partial^{2}}{\partial z_0\partial \bar z_0}\left(e^{D_q(z_0,z)}-1\right)\right)_{z_0=0}\right]_{z=0},
\end{equation}
where comparing with the multi-index notation in \eqref{bjkexplicit} we are setting $m_j=(1,\hat m_j)$, since we are isolating the derivatives with respect to $z_0$ and $\bar z_0$.
By direct computation:
\begin{equation}
\begin{split}
&\left[\frac{\partial^{2}}{\partial z_0\partial \bar z_0}\left(e^{D_q(z_0,z)}-1\right)\right]_{{z_0=0}}
=\left[\frac{\partial}{\partial z_0}\left\{e^{D_q(z_0,z)}\left(c(z_0+\epsilon)^c(\bar z_0+\epsilon)^{c-1}e^{c\psi\left(z\right)}- c\epsilon^c(\bar z_0+\epsilon)^{c-1}\right)\right\}\right]_{{z_0=0}}\\
&=\left[e^{D_q(z_0,z)}\left\{c^2(z_0+\epsilon)^{c-1}(\bar z_0+\epsilon)^{c-1}e^{c\psi\left(z\right)}+\right.\right.\\
&\left.\left.+\left((z_0+\epsilon)^cc(\bar z_0+\epsilon)^{c-1}e^{c\psi\left(z\right)}-\epsilon^cc(\bar z_0+\epsilon)^{c-1}\right)\left((\bar z_0+\epsilon)^cc( z_0+\epsilon)^{c-1}e^{c\psi\left(z\right)}-\epsilon^cc( z_0+\epsilon)^{c-1}\right)\right\}\right]_{z_0=0}\\
   &=c^2\epsilon^{2c-2}e^{D_q(0,z)}\left\{ 
\epsilon^{2c}\left(e^{c\psi\left(z\right)}- 1\right)^2
+e^{c\psi(z)}
   \right\}.
\end{split}\nonumber
\end{equation}
The matrix $(u_{jk})$ is positive semidefinite if and only if $(v_{jk}):=c^{-2}\epsilon^{2-2c}(u_{jk})$ is, for any $\epsilon>0$. 
Since
$$
v_{jk}=\left[\frac{1}{\hat m_j!\hat m_k!}\frac{\partial^{|\hat m_j|+|\hat m_k|}}{\partial z^{\hat m_j}\bar z^{\hat m_k}}e^{D_q(0,z)}\left\{ \epsilon^{2c}\left(e^{c\psi\left(z\right)}- 1\right)^2
+e^{c\psi(z)}
   \right\}\right]_{z=0},
$$
is positive semidefinite for any $\epsilon>0$, it must remain positive semidefinite as $\epsilon$ goes to 0. The conclusion follows observing that for $\epsilon=0$ one has
$$(v_{jk})= \left[\frac{1}{\hat m_j!\hat m_k!}\frac{\partial^{|\hat m_j|+|\hat m_k|}}{\partial z^{\hat m_j}\bar z^{\hat m_k}}e^{c\psi(z)}\right]_{z=0}=\left[\frac{1}{\hat m_j!\hat m_k!}\frac{\partial^{|\hat m_j|+|\hat m_k|}}{\partial z^{\hat m_j}\bar z^{\hat m_k}}\left(e^{c\psi(z)}-1\right)\right]_{z=0}=(b_{jk}).$$
\end{proof}
%{\color{blue}\begin{remark}\rm
%Observe that since performing a $\mathcal D_a$--homothetic transformation of $(S,\eta)$ changes the potential $t^2$ on the K\"ahler cone as $\hat t^2=t^{2a}$, when $t^2=|z_0|^2e^{\psi}$ we get $\hat t^2=|z_0|^{2a}e^{a\psi}$. When $a\notin \mathds Z^+$ this potential has no hope to be associated to a projectively induced metric, since the coefficients in the power expansion at $z_0=\epsilon$:
%$$
%|z_0|^{2a}=\sum_{j=0}^\infty\frac{\epsilon^{a-j}|z_0|^{2j}}{j!}\prod_{k=0}^{j-1}(a-k),
%$$
%are nonnegative if and only if $a\in \mathds Z^+$.
%\end{remark}}
%

\begin{proof}[Proof of Theorem \ref{main}]
If the metric is flat it is trivially Ricci-flat, furthermore by a result of Calabi \cite{calabi} it is also projectively induced. Assume now  that $C(S)$ is Ricci--flat and projectively induced. By Theorem \ref{SE}
the metric cone $(C(S),\bar g, \Omega)$ is Ricci--flat if and only if $(S,\xi,\eta,\Phi,g)$ is Sasaki Einstein and
by Myers' Theorem, a complete Sasaki--Einstein manifold is forced to be compact. By the Structure Theorem \ref{TheorStructure} a compact regular Sasakian manifold is a principal circle bundle over a compact Hodge manifold $(X,\omega)$. Denote by $p\!:S\rightarrow X$ the bundle projection. 
Recall that the transverse K\"ahler metric $g^T$ is K\"ahler--Einstein with Einstein constant $\lambda^T=2(n+1)$ (cf. Theorem \ref{SE}). Since for some $a>0$, $p^*\omega=\frac a2d\eta$, and $\frac12d\eta$ is the K\"ahler form associated to $g^T$, then $(X,\frac 1a\omega)$ is a compact K\"ahler--Einstein manifold with Einstein constant $2(n+1)$.

By Proposition \ref{amelia}, for any $c>0$, $c\omega$ is projectively induced if and only if $\Omega_c$ is. Since by Lemma \ref{giulio}, $\Omega=\Omega_{1/a}$, we obtain that $\frac1a\omega$ is projectively induced if and only if $\Omega$ is. Observe that the K\"ahler immersion of $(X,\frac 1a\omega)$ in $\mathds C{\rm P}^N$ is also global since the manifold is Fano and thus simply connected (see Remark \ref{globalcalabi}).

  By Hulin's Theorem \ref{hulin}, a projectively induced K\"ahler-Einstein compact manifold with Einstein constant $2(n+1)$ is forced to be the complex projective space with the Fubini--Study metric, i.e. $(X,\frac 1a\omega)\simeq(\mathds C{\rm P}^n,\omega_{FS})$, 
and the Sasakian manifold is Sasaki equivalent to the sphere $\mathds S^{2n+1}$ with the standard Sasakian structure, whose cone is the complex Euclidean space with its flat metric $(C(S),\Omega)\simeq (\mathds C^{n+1}\setminus \{0\},\omega_0)$  (see \cite[Ex. 5]{CML} and Example \ref{hey} above). 
\end{proof}
\begin{remark}\rm
Observe that instead of applying Hulin's Theorem, in the last part of the proof of Theorem \ref{main} we could have argued in the following way. The global K\"ahler immersion of $(X,\frac1a \omega)$ into $\mathds C{\rm P}^{N<\infty}$ induces by \cite[Proposition 3]{CML} a Sasakian immersion of $S$ in the Sasakian sphere $\mathds S^{2N+1}$. Thus, by \cite[Theorem 1]{CML}, $S$ is forced to be Sasakian equivalent to a standard Sasakian sphere and its cone to be biholomorphically isometric to $\mathds C^{n+1}\setminus\{0\}$ with the flat metric.  
\end{remark}
We conclude the paper with the proof of Theorem \ref{main2} showing that, up to $\mathcal D_a$--homothetic transformation, K\"ahler cones over homogeneous compact Sasakian manifolds are projectively induced. 

\begin{proof}[Proof of Theorem \ref{main2}]
By \cite[Theorem 8.3.6]{BG}, a compact homogeneous Sasakian manifold is regular and fibers over a simply connected homogeneous K\"ahler manifold $(X,\omega)$. By Theorem \ref{TheorStructure}, $\omega$ is Hodge and $p^*\omega=\frac a2d\eta$, where $p\!:S\rightarrow X$ is the fiber projection. By Lemma \ref{giulio}, if locally $\omega=\frac i2\partial\bar \partial \psi$, then locally the K\"ahler form on the cone reads $\Omega=\frac i2\partial\bar \partial |z_0|^{\frac2a} e^{\frac 1a\psi}$.

%Observe that $\Omega_c= \frac i2\partial\bar \partial |z_0|^2 e^{c\psi}$ is the metric on $C(S)$ obtained performing a $\mathcal D_{ac}$--homothetic transformation on $S$, since if $t^2=|z_0|^2 e^{\frac 1a\psi}$, then $t^{2ac}=|z_0|^{2ac} e^{c\psi}\simeq|z_0|^{2} e^{c\psi}$, where the last equivalence holds in view of a change of variable on $\mathds C\setminus\{0\}$ such that $|z_0|^{2ac}\rightarrow |z_0|^2$.using Dynkin diagrams of semisimple Lie groups,

Recall that in \cite[Section 2]{take} Takeuchi proved 
that if the K\"ahler form $\omega$ of a compact and simply connected homogeneous K\"ahler manifold is integral, then there exists a positive integer $k$ such that $k\omega$ is projectively induced. The flag manifold $(X,\omega)$ satisfies  the hypothesis of Takeuchi's theorem, thus $k\omega$ admits a global K\"ahler immersion in $\mathds C{\rm P}^N$. 

Performing a $\mathcal D_{ak}$--homothetic transformation on $(S,\eta)$ the potential $t^2=|z_0|^{\frac2a} e^{\frac 1a\psi}$ on the K\"ahler cone changes accordingly in $t^{2ak}=|z_0|^{2k} e^{k\psi}$. %Let $z_0=\rho_0 e^{i\theta_0}$ in polar coordinates, and consider the {\color{red}holomorphic} change of variables that modifies $z_0$ alone in $\hat z_0=\rho_0^{ac}e^{i\theta_0}$. In the new coordinates, we have $t^{2ac}=|\hat z_0|^{2} e^{c\psi}$. 
By Proposition \ref{amelia}, $\frac i2\partial\bar \partial |z_0|^{2k} e^{k\psi}$ is projectively induced if and only if $k\omega$ is, and we are done.
\end{proof}

\begin{remark}\rm
Observe that in both the proofs of Theorem \ref{main} and Theorem \ref{main2}, the local arguments given by Lemma \ref{giulio} and Proposition \ref{amelia} could be extended to quasi regular Sasakian manifolds. However, in the quasi-regular case $X$ is a Hodge K\"ahler orbifold and in this setting we do not have  the techniques developed for K\"ahler immersions that are crucial in the concluding steps of our proofs.
\end{remark}

\end{document}